\documentclass[reqno]{amsart}
\usepackage{amssymb,mathtools,mathrsfs}
\usepackage[a4paper,margin=2.5cm]{geometry}


\usepackage{ifpdf}
\ifpdf
 \usepackage[hyperindex]{hyperref}
\else
 \expandafter\ifx\csname dvipdfm\endcsname\relax
 \usepackage[hypertex,hyperindex]{hyperref}%
 \else
 \usepackage[dvipdfm,hyperindex]{hyperref}%
 \fi
\fi
\allowdisplaybreaks[4]
\theoremstyle{plain}
\newtheorem{thm}{Theorem}
\newtheorem{prop}{Proposition}

\theoremstyle{remark}
\newtheorem{rem}{Remark}

\DeclareMathOperator{\td}{d\!}
\DeclareMathOperator{\te}{e}

\DeclareMathOperator{\diag}{diag}

\begin{document}

\title[Expressing the difference of two Hurwitz zeta functions]
{Expressing the difference of two Hurwitz zeta functions by a linear combination of the Gauss hypergeometric functions}

\author[F. Qi]{Feng Qi}
\address{School of Mathematics and Informatics, Henan Polytechnic University, Jiaozuo, Henan, 454010, China;
17709 Sabal Court, Dallas, TX 75252-8025, USA;
School of Mathematics and Physics, Hulunbuir University, Hailar, Inner Mongolia, 021008, China}
\email{\href{mailto: F. Qi <qifeng618@gmail.com>}{qifeng618@gmail.com}}
\urladdr{\url{https://orcid.org/0000-0001-6239-2968}}

\begin{abstract}
In the paper, the author expresses the difference $2^m\bigl[\zeta\bigl(-m,\frac{1+x}{2}\bigr)-\zeta\bigl(-m,\frac{2+x}{2}\bigr)\bigr]$ in terms of a linear combination of the function $\Gamma(m+1){\,}_2F_1(-m,-x;1;2)$ for $m\in\mathbb{N}_0$ and $x\in(-1,\infty)$ in the form of matrix equations, where $\Gamma(z)$, $\zeta(z,\alpha)$, and ${}_2F_1(a,b;c;z)$ stand for the classical Euler gamma function, the Hurwitz zeta function, and the Gauss hypergeometric function, respectively. This problem originates from the Landau level quantization in solid state materials.
\end{abstract}

\subjclass{Primary 11M35; Secondary 15A09, 33C05}

\keywords{Hurwitz zeta function; difference; Gauss hypergeometric function; matrix equation; inverse of lower triangular matrix; linear combination}

\thanks{This paper was typeset using \AmS-\LaTeX}

\maketitle

\section{Preliminaries}

The Hurwitz zeta function $\zeta(z,\alpha)$ is defined by
$$
\zeta(z,\alpha)=\sum_{n=0}^{\infty}\frac{1}{(n+\alpha)^z}, \quad \Re(z)>1
$$
for $\alpha\not=0,-1,-2,-3,\dotsc$. 
It has a meromorphic continuation in the $z$-plane, its only singularity in $\mathbb{C}$ is a simple pole at $z=1$ with residue $1$. As a function of $\alpha$, with $z\ne1$ fixed, the function $\zeta(z,\alpha)$ is analytic in the half-plane $\Re(\alpha)>0$. The Riemann zeta function $\zeta(z)$ is the special case $\zeta(z,1)$. For more details, please refer to~\cite[p.~607]{NIST-HB-2010} and~\cite[pp.~75--76]{Temme-96-book}.
\par
The Dirichlet eta function $\eta(z)$ and the Riemann zeta function $\zeta(z)$ have the relation
\begin{equation*}
\eta(z)=\bigl(1-2^{1-z}\bigr)\zeta(z).
\end{equation*}
See~\cite{CMP6528.tex, Mon-Eta-Ratio.tex, log-secant-norm-tail.tex} and closely related references therein.
\par
The classical Euler gamma function $\Gamma(z)$ can be defined~\cite[Chapter~3]{Temme-96-book} by
\begin{equation*}
\Gamma(z)=\lim_{n\to\infty}\frac{n!n^z}{\prod_{k=0}^n(z+k)}, \quad z\in\mathbb{C}\setminus\{0,-1,-2,\dotsc\}.
\end{equation*}
In terms of the Pochhammer symbol, also known as the rising factorial,
\begin{equation}\label{rising-Factorial}
(z)_n=\frac{\Gamma(z+n)}{\Gamma(z)}
=\prod_{\ell=0}^{n-1}(z+\ell)
=
\begin{cases}
z(z+1)\dotsm(z+n-1), & n\in\mathbb{N}\\
1, & n=0
\end{cases}
\end{equation}
for $z\in\mathbb{C}$, the Gauss hypergeometric function ${}_2F_1(a,b;c;z)$ is defined by
\begin{equation}\label{hypergeom-f}
{\,}_2F_1(a,b;c;z)=\sum_{k=0}^\infty\frac{(a)_k(b)_k} {(c)_k}\frac{z^k}{k!}
\end{equation}
for $a,b\in\mathbb{C}$ and $c\in\mathbb{C}\setminus\{0,-1,-2,\dotsc\}$.
When $a\in\{0,-1,-2,\dotsc\}$ or $b\in\{0,-1,-2,\dotsc\}$, the series in~\eqref{hypergeom-f} is finite and is a polynomial of $z\in\mathbb{C}$. When $a,b\not\in\{0,-1,-2,\dotsc\}$, the infinite series~\eqref{hypergeom-f} converges in the unit disc $|z|<1$ and, on the circle $|z|=1$, is
\begin{enumerate}
\item
divergent for $\Re(a+b-c)\ge1$,
\item
absolutely convergent for $\Re(a+b-c)<0$,
\item
conditionally convergent for $0\le\Re(a+b-c)<1$, the point $z=1$ being excluded.
\end{enumerate}
For $c\in\{0,-1,-2,\dotsc\}$, if $a\in\{-1,-2,\dotsc\}$ such that $a>c$ or $b\in\{-1,-2,\dotsc\}$ such that $b>c$, the Gauss hypergeometric function ${}_2F_1(a,b;c;z)$ is also defined.
The Gauss hypergeometric function ${}_2F_1(a,b;c;z)$ can be continued to a single-valued analytic function of $z\in\mathbb{C}$ with the restrictions: $|\arg(-z)|<\pi$ and $a,b\in\mathbb{C}\setminus\{0,-1,-2,\dotsc\}$.
For more information on the Gauss hypergeometric function ${}_2F_1(a,b;c;z)$, please refer to~\cite[Chapter~5]{Temme-96-book}.
\par
Paired to the rising factorial $(z)_n$ in~\eqref{rising-Factorial}, the falling factorial $\langle z\rangle_n$ was defined in~\cite[p.~6]{Comtet-Combinatorics-74} by
\begin{equation}\label{Fall-Factorial-Dfn-Eq}
\langle z\rangle_n=\frac{\Gamma(z+1)}{\Gamma(z-n+1)}
=
\prod_{k=0}^{n-1}(z-k)=
\begin{cases}
z(z-1)\dotsm(z-n+1), & n\in\mathbb{N};\\
1,& n=0.
\end{cases}
\end{equation}
\par
It is well known~\cite[p.~3]{Temme-96-book} that the Bernoulli polynomials $B_{j}(t)$ are generated by
\begin{equation*}
\frac{z\te^{t z}}{\te^z-1}=\sum_{j=0}^{\infty}B_j(t)\frac{z^j}{j!}, \quad |z|<2\pi
\end{equation*}
and that the sequence $B_j=B_j(0)$ for $j\in\mathbb{N}_0$ stands for the Bernoulli numbers.
\par
The Stirling numbers of the first and second kinds $s(k,j)$ and $S(k,j)$ are basic and fundamental notions in combinatorial number theory.
They can be analytically generated by
\begin{equation*}
\biggl[\frac{\ln(1+z)}{z}\biggr]^j=\sum_{k=0}^\infty \frac{s(k+j,j)}{\binom{k+j}{j}}\frac{z^{k}}{k!}, \quad |z|<1
\end{equation*}
and
\begin{equation*}
\biggl(\frac{\te^z-1}{z}\biggr)^j=\sum_{k=0}^\infty \frac{S(k+j,j)}{\binom{k+j}{j}} \frac{z^{k}}{k!}, \quad j\ge0,
\end{equation*}
respectively; see~\cite[pp.~46 and~56]{Mansour-Schork-B2016} and~\cite[pp.~132 and~168]{Quaintance-Gould-Stirling-B}.

\section{Inverse of lower triangular matrices}
In this section, we derive the inverse of lower triangular matrices. In next section, we will use the inverse to establish our main results.

\begin{prop}\label{Im-Inverse-lem}
Let $I_m$ be the unit $m\times m$ matrix, let $L_m$ be a strictly lower triangular $m\times m$ matrix, and let $D_m$ be a diagonal $m\times m$ matrix with non-zero diagonal members. Then the inverses
\begin{equation}\label{Im-Inverse-lem-eq}
(I_m+L_m)^{-1}=I_m+\sum_{k=1}^{m-1}(-1)^kL_m^k
\end{equation}
and
\begin{equation}\label{beta-matrix-inv-D-eq}
(D_m+L_m)^{-1}=\Biggl[I_m+\sum_{k=1}^{m-1}(-1)^k\bigl(D_m^{-1}L_m\bigr)^k\Biggr]D_m^{-1}
\end{equation}
are valid.
\end{prop}

\begin{proof}
It is common knowledge that $L_m^m=O_m$, a null $m\times m$ matrix whose elements are all zeros, in linear algebra or the theory of matrices. Then
\begin{equation*}
(I_m+L_m)\Biggl[I_m+\sum_{k=1}^{m-1}(-1)^kL_m^k\Biggr]=I_m.
\end{equation*}
Accordingly, the formula~\eqref{Im-Inverse-lem-eq} is valid.
\par
Making use of the equation
\begin{equation*}
D_m+L_m=D_m\bigl(I_m+D_m^{-1}L_m\bigr)
\end{equation*}
and the inverse in~\eqref{Im-Inverse-lem-eq}, we conclude the inverse~\eqref{beta-matrix-inv-D-eq} immediately.
The proof of Proposition~\ref{Im-Inverse-lem} is thus complete.
\end{proof}

\section{Main results and their proofs}
Let
$$
F(m,n)=2^m\biggl[\zeta\biggl(-m,\frac{1+n}{2}\biggr)-\zeta\biggl(-m,\frac{2+n}{2}\biggr)\biggr]
$$
and
$$
G(m,n)=\Gamma(m+1){\,}_2F_1(-m,-n;1;2)
$$
for $m,n\in\mathbb{N}_0$.
When studying the Landau level quantization in solid state materials, we need a linear combination for expressing $F(m,n)$ in terms of $G(m,n)$. In other words, we need to determine the scalars $a_{m,j}$ such that the linear combination
\begin{equation}\label{Long-Propb}
F(m,n)=\sum_{j=0}^m a_{m,j} G(j,n)
\end{equation}
is valid for $m,n\in\mathbb{N}_0$.
\par
In this section, we will give several matrix formulas for the matrix $\begin{pmatrix}a_{i,j}\end{pmatrix}_{0\le i,j\le m}$.
\par
It is easy to see that the sequences $F(m,n)$ and $G(m,n)$ for $m,n\in\mathbb{N}_0$ can be extended to $F(m,x)$ and $G(m,x)$ for $m\in\mathbb{N}_0$ and $x\in(-1,\infty)$.

\begin{thm}\label{FG(m-x)-matrix-Rel-thm}
For $m\in\mathbb{N}_0$ and $x\in(-1,\infty)$, the matrix equation
\begin{equation}\label{FG(m-x)-matrix-Relation}
\begin{pmatrix}
F(0,x)\\ F(1,x)\\ F(2,x)\\ F(3,x)\\ \vdots\\ F(m-1,x) \\ F(m,x)
\end{pmatrix}
=
\mathcal{A}_{m+1}\mathcal{B}_{m+1}^{-1}
\begin{pmatrix}
G(0,x)\\ G(1,x)\\ G(2,x)\\ G(3,x)\\ \vdots\\ G(m-1,x) \\ G(m,x)
\end{pmatrix},
\end{equation}
or, equivalently,
\begin{equation}\label{aij-mstrix}
\begin{pmatrix}a_{i,j}\end{pmatrix}_{0\le i,j\le m}=\mathcal{A}_{m+1}\mathcal{B}_{m+1}^{-1},
\end{equation}
is uniquely valid, where
\begin{align}
\mathcal{A}_{m+1}&=\begin{pmatrix}\alpha_{i,j}\end{pmatrix}_{0\le i,j\le m}
=\begin{pmatrix}\displaystyle\frac{2^{i}}{i+1}\sum_{k=j}^{i}\binom{i+1}{k+1} \binom{k+1}{j}\frac{2^{k-j+1}-1}{2^{k+1}}B_{i-k}\end{pmatrix}_{0\le i,j\le m}, \label{Matrix-A(m+1)-dfn}\\
\mathcal{B}_{m+1}&=
\begin{pmatrix}
\beta_{i,j}
\end{pmatrix}_{0\le i,j\le m}
=\begin{pmatrix}\displaystyle\sum_{k=j}^i2^k(i-k)!\binom{i}{k}^2 s(k,j)\end{pmatrix}_{0\le i,j\le m}, \label{Matrix-B(m+1)-dfn}
\end{align}
and the notations $B_{i-k}$ and $s(k,j)$ stand for the Bernoulli numbers and the Stirling numbers of the first kind, respectively.
\end{thm}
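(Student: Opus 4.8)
The plan is to expand both families $F(m,x)$ and $G(m,x)$ in the monomial basis $\{1,x,x^2,\dotsc\}$, read off the two transition matrices, and then eliminate the monomial vector. Writing $\mathbf{v}=(1,x,\dotsc,x^m)^{T}$, I would first establish the two polynomial identities
\[
F(i,x)=\sum_{j=0}^{i}\alpha_{i,j}x^j, \qquad G(i,x)=\sum_{j=0}^{i}\beta_{i,j}x^j, \qquad 0\le i\le m,
\]
with $\alpha_{i,j}$ and $\beta_{i,j}$ exactly as in~\eqref{Matrix-A(m+1)-dfn} and~\eqref{Matrix-B(m+1)-dfn}; in matrix form these read $\mathbf{F}=\mathcal{A}_{m+1}\mathbf{v}$ and $\mathbf{G}=\mathcal{B}_{m+1}\mathbf{v}$. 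Both $F(i,\cdot)$ and $G(i,\cdot)$ are polynomials of degree exactly $i$, so $\mathcal{A}_{m+1}$ and $\mathcal{B}_{m+1}$ are automatically lower triangular, and the whole theorem reduces to computing their entries and inverting $\mathcal{B}_{m+1}$.

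For $\mathcal{B}_{m+1}$ I would start from the terminating series~\eqref{hypergeom-f}. Using $(-m)_k=(-1)^k\langle m\rangle_k$, $(-x)_k=(-1)^k\langle x\rangle_k$, and $(1)_k=k!$, the two sign factors cancel and give
\[
G(m,x)=m!\sum_{k=0}^{m}\frac{\langle m\rangle_k\,2^k}{(k!)^2}\langle x\rangle_k.
\]
Substituting the Stirling expansion $\langle x\rangle_k=\sum_{j=0}^k s(k,j)x^j$ and collecting the coefficient of $x^j$, the elementary identity $\frac{(m!)^2}{(m-k)!(k!)^2}=(m-k)!\binom{m}{k}^2$ reproduces $\beta_{m,j}$ term by term. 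In particular $\beta_{i,i}=2^i s(i,i)=2^i\ne0$, so the diagonal of $\mathcal{B}_{m+1}$ is nonzero.

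For $\mathcal{A}_{m+1}$ I would invoke $\zeta(-m,\alpha)=-\frac{B_{m+1}(\alpha)}{m+1}$, which converts $F(m,x)$ into a difference of Bernoulli polynomials,
\[
F(m,x)=\frac{2^m}{m+1}\Bigl[B_{m+1}\Bigl(\tfrac{2+x}{2}\Bigr)-B_{m+1}\Bigl(\tfrac{1+x}{2}\Bigr)\Bigr].
\]
Expanding $B_{m+1}(y)=\sum_{\ell}\binom{m+1}{\ell}B_{m+1-\ell}y^\ell$ at $y=\frac{c+x}{2}$ and applying the binomial theorem to $(c+x)^\ell$, the difference between $c=2$ and $c=1$ produces precisely the factor $2^{\ell-j}-1$, so the coefficient of $x^j$ becomes $\frac{2^m}{m+1}\sum_{\ell=j}^{m+1}\binom{m+1}{\ell}\binom{\ell}{j}\frac{2^{\ell-j}-1}{2^\ell}B_{m+1-\ell}$. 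Reindexing $\ell=k+1$ turns this into the stated $\alpha_{m,j}$; the apparent extra term $k=j-1$ drops out because its factor $2^{k-j+1}-1$ vanishes, which is what reconciles the summation range $j\le k\le m$. This index shift together with the vanishing boundary term is the step that most needs care, since getting it slightly wrong would leave a spurious lower-order contribution.

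Finally, since $\mathcal{B}_{m+1}$ is lower triangular with nonzero diagonal, splitting $\mathcal{B}_{m+1}=D_{m+1}+L_{m+1}$ into its diagonal and strictly lower parts and applying~\eqref{beta-matrix-inv-D-eq} of Proposition~\ref{Im-Inverse-lem} gives an explicit invertible $\mathcal{B}_{m+1}^{-1}$. Then $\mathbf{v}=\mathcal{B}_{m+1}^{-1}\mathbf{G}$ yields $\mathbf{F}=\mathcal{A}_{m+1}\mathcal{B}_{m+1}^{-1}\mathbf{G}$, which is~\eqref{FG(m-x)-matrix-Relation}, and hence~\eqref{aij-mstrix}. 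Uniqueness is immediate: the polynomials $\{G(j,x)\}_{j=0}^{m}$ have degrees $0,1,\dotsc,m$ and so form a basis of the space of polynomials of degree at most $m$, equivalently $\mathcal{B}_{m+1}$ is invertible, which forces the coefficient matrix $\begin{pmatrix}a_{i,j}\end{pmatrix}$ to equal $\mathcal{A}_{m+1}\mathcal{B}_{m+1}^{-1}$.
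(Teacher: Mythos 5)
Your proposal is correct and follows essentially the same route as the paper's own proof: expand $F(i,x)$ via $\zeta(-m,a)=-B_{m+1}(a)/(m+1)$ and the Bernoulli-number expansion of $B_{m+1}$, expand $G(i,x)$ via the terminating ${}_2F_1$ series together with $(z)_k=(-1)^k\langle -z\rangle_k$ and $\langle x\rangle_k=\sum_{j}s(k,j)x^j$, then eliminate the monomial vector using the invertibility of the lower triangular matrix $\mathcal{B}_{m+1}$ with diagonal $\beta_{i,i}=2^i$ (your index-shift observation, that the $k=j-1$ term dies because $2^{k-j+1}-1=0$, is exactly the reindexing step in the paper). Your explicit uniqueness argument via the degrees of $\{G(j,x)\}_{j=0}^m$ is a small addition the paper leaves implicit, and your appeal to Proposition~\ref{Im-Inverse-lem} is not needed here (the paper defers it to Theorem~\ref{FG(m-x)-matrix-Final-Thm}), but neither affects correctness.
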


\begin{proof}
In~\cite[p.~608 and p.~588]{NIST-HB-2010}, we find
\begin{equation*}
\zeta(-m,a)=-\frac{B_{m+1}(a)}{m+1}, \quad m\in\mathbb{N}_0
\end{equation*}
and
\begin{equation}\label{p.588NIST-HB-2010}
B_n(z)=\sum_{k=0}^{n}\binom{n}{k}B_kz^{n-k}, \quad n\in\mathbb{N}_0.
\end{equation}
Thus, we arrive at
\begin{align*}
F(m,x)&=\frac{2^m}{m+1}\biggl[B_{m+1}\biggl(\frac{2+x}{2}\biggr)-B_{m+1}\biggl(\frac{1+x}{2}\biggr)\biggr]\\
&=\frac{1}{m+1}\sum_{k=0}^{m+1}\binom{m+1}{k} B_k2^{k-1}\bigl[(2+x)^{m+1-k} -(1+x)^{m+1-k}\bigr]\\
&=\frac{1}{m+1}\sum_{k=0}^{m+1}\binom{m+1}{k} B_k2^{k-1} \sum_{j=0}^{m+1-k}\binom{m+1-k}{j}\bigl(2^{m+1-k-j}-1\bigr)x^j\\
&=\frac{1}{m+1}\sum_{\ell=0}^{m+1}\binom{m+1}{\ell} B_{m-\ell+1}2^{m-\ell} \sum_{j=0}^{\ell}\binom{\ell}{j}\bigl(2^{\ell-j}-1\bigr)x^j\\
&=\frac{2^{m}}{m+1}\sum_{j=0}^{m+1}\Biggl[\sum_{k=j}^{m+1}\binom{m+1}{k} \binom{k}{j}\frac{2^{k-j}-1}{2^{k}}B_{m-k+1}\Biggr]x^j
\end{align*}
for $m\in\mathbb{N}_0$ and $x\in(-1,\infty)$. As a result, we obtain
\begin{equation}\label{F(m-x)-x-polyn}
\begin{aligned}
F(m,x)&=\frac{2^{m}}{m+1}\sum_{j=0}^{m}\Biggl[\sum_{k=j}^{m}\binom{m+1}{k+1} \binom{k+1}{j}\frac{2^{k-j+1}-1}{2^{k+1}}B_{m-k}\Biggr]x^j\\
&=\sum_{j=0}^{m}\alpha_{m,j}x^j
\end{aligned}
\end{equation}
for $m\in\mathbb{N}_0$ and $x\in(-1,\infty)$. The function $F(m,x)$ is a polynomial of order $m$ in $x$.
\par
By the definition~\eqref{hypergeom-f} for $a\in\{0,-1,-2,\dotsc\}$ or $b\in\{0,-1,-2,\dotsc\}$, we have
\begin{align*}
{\,}_2F_1(-m,-x;1;2)&=\sum_{k=0}^m\frac{(-m)_k2^k} {(k!)^2}(-x)_k\\
&=\sum_{k=0}^m\frac{\langle m\rangle_k2^k} {(k!)^2}\sum_{j=0}^ks(k,j)x^j\\
&=\sum_{j=0}^m\Biggl[\sum_{k=j}^m\frac{\langle m\rangle_k2^k} {(k!)^2}s(k,j)\Biggr]x^j
\end{align*}
for $m\in\mathbb{N}_0$ and $x\in\mathbb{R}$, where we used the expansion
\begin{equation}\label{p6Comtet-Combinatorics-74}
\langle z\rangle_k=\sum_{j=0}^k s(k,j)z^j
\end{equation}
in~\cite[p.~29]{Jr.Hall-B-1986} and the relation
\begin{equation}\label{falling2rising}
(z)_k=(-1)^k\langle-z\rangle_k,
\end{equation}
which can be directly derived from comparing~\eqref{rising-Factorial} and~\eqref{Fall-Factorial-Dfn-Eq}.
Further making use of the first equality in~\eqref{Fall-Factorial-Dfn-Eq}, we obtain
\begin{equation}\label{G(m-x)-x-polyn}
\begin{aligned}
G(m,x)&=m!\sum_{j=0}^m\Biggl[\sum_{k=j}^m\frac{\langle m\rangle_k2^k} {(k!)^2}s(k,j)\Biggr]x^j\\
&=\sum_{j=0}^m\Biggl[\sum_{k=j}^m2^k(m-k)!\binom{m}{k}^2 s(k,j)\Biggr]x^j\\
&=\sum_{j=0}^m\beta_{m,j}x^j
\end{aligned}
\end{equation}
for $m\in\mathbb{N}_0$ and $x\in\mathbb{R}$. The function $G(m,x)$ is a polynomial of order $m$ in $x$.
\par
The formulas~\eqref{F(m-x)-x-polyn} and~\eqref{G(m-x)-x-polyn} can be rearranged as matrix equations
\begin{equation*}
\begin{pmatrix}
F(0,x)\\ F(1,x)\\ F(2,x)\\ F(3,x)\\ \vdots\\ F(m-1,x) \\ F(m,x)
\end{pmatrix}
=\mathcal{A}_{m+1}\begin{pmatrix}
1\\ x\\ x^2\\ x^3\\ \vdots\\ x^{m-1}\\ x^m
\end{pmatrix}
\quad\text{and}\quad
\begin{pmatrix}
G(0,x)\\ G(1,x)\\ G(2,x)\\ G(3,x)\\ \vdots\\ G(m-1,x) \\ G(m,x)
\end{pmatrix}
=\mathcal{B}_{m+1}\begin{pmatrix}
1\\ x\\ x^2\\ x^3\\ \vdots\\ x^{m-1}\\ x^m
\end{pmatrix}
\end{equation*}
for $m\in\mathbb{N}_0$ and $x\in(-1,\infty)$, respectively.
Because $\beta_{i,i}=2^i$ for $0\le i\le m\in\mathbb{N}_0$, the $(m+1)\times(m+1)$ square matrix $\mathcal{B}_{m+1}$ is invertible. Consequently, we acquire the matrix equation~\eqref{FG(m-x)-matrix-Relation}.
The proof of Theorem~\ref{FG(m-x)-matrix-Rel-thm} is complete.
\end{proof}

\begin{thm}\label{frak-AB-matrix-thm}
For $m\in\mathbb{N}_0$, let
\begin{equation}\label{lambda-matrix}
\mathfrak{A}_{m+1}=
\begin{pmatrix}
\lambda_{i,j}
\end{pmatrix}_{0\le i,j\le m}
=\begin{pmatrix}\displaystyle
\sum_{k=0}^{i-j} \binom{i}{k}\frac{2^{k-1}B_k}{i-k+1} \binom{i-k+1}{j}
\end{pmatrix}_{0\le i,j\le m}
\end{equation}
and
\begin{equation}\label{mu-matrix}
\mathfrak{B}_{m+1}=
\begin{pmatrix}
\mu_{i,j}
\end{pmatrix}_{0\le i,j\le m}
=\begin{pmatrix}\displaystyle
\sum_{k=j}^i2^k(i-k)!\binom{i}{k}^2s(k+1,j+1)
\end{pmatrix}_{0\le i,j\le m}.
\end{equation}
Then the matrix equation
\begin{equation}\label{frak-AB-matrix}
\begin{pmatrix}
F(0,x)\\ F(1,x)\\ F(2,x)\\ F(3,x)\\ \vdots\\ F(m-1,x) \\ F(m,x)
\end{pmatrix}
=\mathfrak{A}_{m+1}\mathfrak{B}_{m+1}^{-1}
\begin{pmatrix}
G(0,x)\\ G(1,x)\\ G(2,x)\\ G(3,x)\\ \vdots\\ G(m-1,x) \\ G(m,x)
\end{pmatrix}
\end{equation}
or, equivalently,
\begin{equation}\label{frakaij-mstrix}
\begin{pmatrix}a_{i,j}\end{pmatrix}_{0\le i,j\le m}=\mathfrak{A}_{m+1}\mathfrak{B}_{m+1}^{-1},
\end{equation}
is uniquely valid for $m\in\mathbb{N}_0$ and $x\in(-1,\infty)$.
\end{thm}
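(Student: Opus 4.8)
The plan is to recognize that Theorem~\ref{frak-AB-matrix-thm} is nothing but Theorem~\ref{FG(m-x)-matrix-Rel-thm} re-expressed in the shifted polynomial basis $\{(x+1)^j\}_{j\ge0}$ in place of the monomial basis $\{x^j\}_{j\ge0}$. Concretely, I would claim that $\mathfrak{A}_{m+1}$ and $\mathfrak{B}_{m+1}$ are the coefficient matrices of $F(m,x)$ and $G(m,x)$ in powers of $x+1$; that is,
\begin{equation*}
F(m,x)=\sum_{j=0}^m\lambda_{m,j}(x+1)^j \quad\text{and}\quad G(m,x)=\sum_{j=0}^m\mu_{m,j}(x+1)^j.
\end{equation*}
Once these two expansions are in hand, the conclusion follows verbatim from the argument of Theorem~\ref{FG(m-x)-matrix-Rel-thm}: writing $\vec p=(1,x+1,\dots,(x+1)^m)^{\mathsf T}$, one has $\vec F=\mathfrak{A}_{m+1}\vec p$ and $\vec G=\mathfrak{B}_{m+1}\vec p$, and since $\mu_{i,i}=2^i s(i+1,i+1)=2^i\ne0$ shows $\mathfrak{B}_{m+1}$ to be lower triangular and hence invertible, eliminating $\vec p$ gives $\vec F=\mathfrak{A}_{m+1}\mathfrak{B}_{m+1}^{-1}\vec G$. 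Uniqueness of the $a_{i,j}$, guaranteed because $\deg G(j,x)=j$ makes $G(0,x),\dots,G(m,x)$ a basis of the polynomials of degree $\le m$, then yields~\eqref{frakaij-mstrix}.

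For the $G$-expansion I would start from the finite sum ${}_2F_1(-m,-x;1;2)=\sum_{k=0}^m\frac{\langle m\rangle_k2^k}{(k!)^2}\langle x\rangle_k$ already obtained in the proof of Theorem~\ref{FG(m-x)-matrix-Rel-thm} and re-expand each falling factorial $\langle x\rangle_k$ in powers of $x+1$ rather than in powers of $x$. The key identity is that the coefficient of $(x+1)^j$ in $\langle x\rangle_k$ equals $s(k+1,j+1)$; this I would derive from $\langle x\rangle_{k+1}=x\,\langle x-1\rangle_k$ together with the Stirling expansion~\eqref{p6Comtet-Combinatorics-74}: since $\langle x\rangle_{k+1}=\sum_{j}s(k+1,j)x^j$ has vanishing constant term, dividing by $x$ gives $\langle x-1\rangle_k=\sum_{j=0}^k s(k+1,j+1)x^j$, and replacing $x$ by $x+1$ produces the stated coefficients. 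Substituting and using $\frac{m!\langle m\rangle_k}{(k!)^2}=(m-k)!\binom{m}{k}^2$ reproduces $\mu_{m,j}$ exactly.

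For the $F$-expansion I would perform the substitution $x\mapsto x+1$ directly in the Bernoulli-polynomial form $F(m,x)=\frac{2^m}{m+1}\bigl[B_{m+1}(\frac{2+x}{2})-B_{m+1}(\frac{1+x}{2})\bigr]$, noting that $\frac{2+x}{2}=\frac{1+y}{2}$ and $\frac{1+x}{2}=\frac{y}{2}$ for $y=x+1$. Expanding both Bernoulli polynomials via~\eqref{p.588NIST-HB-2010} and using $(1+y)^{n}-y^{n}=\sum_{j=0}^{n-1}\binom{n}{j}y^j$ collects the coefficient of $(x+1)^j$ as $\frac{1}{m+1}\sum_{k=0}^{m-j}\binom{m+1}{k}\binom{m+1-k}{j}2^{k-1}B_k$. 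It then remains to match this against $\lambda_{m,j}$, which reduces to the elementary binomial identity $\frac{1}{m+1}\binom{m+1}{k}\binom{m+1-k}{j}=\frac{1}{m-k+1}\binom{m}{k}\binom{m-k+1}{j}$ (both sides equalling $\frac{m!}{k!\,j!\,(m-k-j+1)!}$). I expect the main obstacle to be organizational rather than conceptual: once the shifted basis $\{(x+1)^j\}$ is identified, the two expansions are routine, but spotting that this single change of basis simultaneously converts $(\mathcal{A}_{m+1},\mathcal{B}_{m+1})$ into $(\mathfrak{A}_{m+1},\mathfrak{B}_{m+1})$—equivalently, that $\mathfrak{A}_{m+1}=\mathcal{A}_{m+1}P^{-1}$ and $\mathfrak{B}_{m+1}=\mathcal{B}_{m+1}P^{-1}$ for the Pascal matrix $P=\bigl(\binom{i}{j}\bigr)$, so that $P^{-1}$ cancels in $\mathfrak{A}_{m+1}\mathfrak{B}_{m+1}^{-1}$—is the crux that renders the whole statement transparent and confirms it agrees with~\eqref{aij-mstrix}.
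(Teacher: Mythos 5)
Your proposal is correct and follows the same overall architecture as the paper's proof: expand $F(m,x)$ and $G(m,x)$ in the shifted basis $\{(x+1)^j\}_{0\le j\le m}$, identify $\mathfrak{A}_{m+1}$ and $\mathfrak{B}_{m+1}$ as the corresponding coefficient matrices, note $\mu_{i,i}=2^i\ne0$ so that $\mathfrak{B}_{m+1}$ is invertible, and eliminate the vector of shifted powers. Where you differ is in how the two expansions are obtained, and your derivations are arguably cleaner. On the $G$-side, the paper reaches the key identity $(-x)_k=(-1)^k\sum_{j=0}^k s(k+1,j+1)(x+1)^j$ by first proving the Pochhammer shift $(z)_k=\frac{z+k-1}{z-1}(z-1)_k$, expanding $\langle x+1\rangle_k$ via~\eqref{p6Comtet-Combinatorics-74}, and then invoking the Stirling recurrence $s(k,j)-k\,s(k,j+1)=s(k+1,j+1)$; your route through $\langle x\rangle_{k+1}=x\,\langle x-1\rangle_k$ and the vanishing of the constant term $s(k+1,0)$ yields the same identity in one step with no recurrence at all. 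On the $F$-side, the paper integrates, writing $F(m,x)=2^m\int_{(1+x)/2}^{(2+x)/2}B_m(t)\td t$ and expanding $B_m$ under the integral, which produces the factor $\frac{1}{m-k+1}\binom{m}{k}$ automatically, whereas you expand $B_{m+1}$ directly via~\eqref{p.588NIST-HB-2010} and reconcile the coefficients through the identity $\frac{1}{m+1}\binom{m+1}{k}\binom{m+1-k}{j}=\frac{1}{m-k+1}\binom{m}{k}\binom{m-k+1}{j}$; both are elementary and both are correct. Finally, your Pascal-matrix observation --- that $\mathfrak{A}_{m+1}=\mathcal{A}_{m+1}P^{-1}$ and $\mathfrak{B}_{m+1}=\mathcal{B}_{m+1}P^{-1}$ for $P=\bigl(\binom{i}{j}\bigr)_{0\le i,j\le m}$, whence $\mathfrak{A}_{m+1}\mathfrak{B}_{m+1}^{-1}=\mathcal{A}_{m+1}\mathcal{B}_{m+1}^{-1}$ --- is a genuine addition: the paper verifies this equality only numerically for $m=9$ and asserts it ``generally'' in a remark, while your change-of-basis argument actually proves it and simultaneously explains why the two theorems must produce the same matrix $\begin{pmatrix}a_{i,j}\end{pmatrix}_{0\le i,j\le m}$ in~\eqref{aij-mstrix} and~\eqref{frakaij-mstrix}.
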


\begin{proof}
Utilizing the formulas
\begin{equation*}
\int_{a}^{x} B_n(t)\td t=\frac{B_{n+1}(x)-B_{n+1}(a)}{n+1}
\end{equation*}
in~\cite[p.~805, Entry~23.1.11]{abram} and~\eqref{p.588NIST-HB-2010}, we arrive at

\begin{align*}
F(m,x)&=\frac{2^m}{m+1}\biggl[B_{m+1}\biggl(\frac{2+x}{2}\biggr)-B_{m+1}\biggl(\frac{1+x}{2}\biggr)\biggr]\\
&=2^m\int_{\frac{1+x}{2}}^{\frac{2+x}{2}}B_m(t)\td t\\
&=2^m\int_{\frac{1+x}{2}}^{\frac{2+x}{2}} \sum_{k=0}^{m}\binom{m}{k}B_k t^{m-k}\td t\\
&=2^m\sum_{k=0}^{m}\binom{m}{k}\frac{B_k}{m-k+1} \biggl[\biggl(\frac{2+x}{2}\biggr)^{m-k+1}-\biggl(\frac{1+x}{2}\biggr)^{m-k+1}\biggr]\\
&=\sum_{k=0}^{m} \binom{m}{k}\frac{2^{k-1}B_k}{m-k+1} \bigl[(2+x)^{m-k+1}-(1+x)^{m-k+1}\bigr]\\
&=\sum_{k=0}^{m} \binom{m}{k}\frac{2^{k-1}B_k}{m-k+1} \sum_{j=0}^{m-k}\binom{m-k+1}{j}(x+1)^j\\
&=\sum_{j=0}^{m}\Biggl[\sum_{k=0}^{m-j} \binom{m}{k}\frac{2^{k-1}B_k}{m-k+1} \binom{m-k+1}{j}\Biggr](x+1)^j
\end{align*}
for $m\in\mathbb{N}_0$ and $x\in(-1,\infty)$. In other words,
\begin{equation}\label{frak-A-matrix}
\begin{pmatrix}
F(0,x)\\ F(1,x)\\ F(2,x)\\ F(3,x)\\ \vdots\\ F(m-1,x) \\ F(m,x)
\end{pmatrix}
=\mathfrak{A}_{m+1}
\begin{pmatrix}
1\\ x+1\\ (x+1)^2\\ (x+1)^3\\ \vdots\\ (x+1)^{m-1}\\ (x+1)^m
\end{pmatrix}, \quad m\in\mathbb{N}_0, \quad x\in(-1,\infty).
\end{equation}
\par
By the definition in~\eqref{rising-Factorial}, we acquire
\begin{equation*}
(z)_k=\frac{\Gamma(z+k)}{\Gamma(z)}=\frac{\Gamma(z+k-1+1)}{\Gamma(z-1+1)}
=\frac{(z+k-1)\Gamma(z+k-1)}{(z-1)\Gamma(z-1)}
=\frac{z+k-1}{z-1}(z-1)_k.
\end{equation*}
Accordingly, utilizing~\eqref{p6Comtet-Combinatorics-74} and~\eqref{falling2rising}, we obtain
\begin{align*}
(-x)_k&=\frac{-x-1+k}{-x-1}(-x-1)_k\\
&=\frac{x+1-k}{x+1}(-1)^k\langle x+1\rangle_k\\
&=(-1)^k\frac{x+1-k}{x+1}\sum_{j=0}^k s(k,j)(x+1)^j\\
&=(-1)^k\Biggl[\sum_{j=0}^k s(k,j)(x+1)^j-k\sum_{j=0}^k s(k,j)(x+1)^{j-1}\Biggr]\\
&=(-1)^k\Biggl[\sum_{j=0}^k s(k,j)(x+1)^j-k\sum_{j=0}^{k-1} s(k,j+1)(x+1)^{j}\Biggr]\\
&=(-1)^k\Biggl[(x+1)^k+\sum_{j=0}^{k-1}[s(k,j)-k s(k,j+1)](x+1)^{j}\Biggr]\\
&=(-1)^k\Biggl[(x+1)^k+\sum_{j=0}^{k-1}s(k+1,j+1)(x+1)^{j}\Biggr]\\
&=(-1)^k\sum_{j=0}^{k}s(k+1,j+1)(x+1)^{j}
\end{align*}
Hence, by the definition~\eqref{hypergeom-f} for $a\in\{0,-1,-2,\dotsc\}$ or $b\in\{0,-1,-2,\dotsc\}$, we obtain
\begin{align*}
{\,}_2F_1(-m,-x;1;2)&=\sum_{k=0}^m\frac{(-m)_k2^k} {(k!)^2}(-x)_k\\
&=\sum_{k=0}^m\frac{\langle m\rangle_k2^k} {(k!)^2}\sum_{j=0}^{k}s(k+1,j+1)(x+1)^{j}\\
&=\sum_{j=0}^{m}\Biggl[\sum_{k=j}^m\frac{\langle m\rangle_k2^k} {(k!)^2}s(k+1,j+1)\Biggr](x+1)^{j}
\end{align*}
for $m\in\mathbb{N}_0$ and $x\in(-1,\infty)$. As a result, we acquire
\begin{align*}
G(m,x)&=m!\sum_{j=0}^{m}\Biggl[\sum_{k=j}^m\frac{\langle m\rangle_k2^k} {(k!)^2}s(k+1,j+1)\Biggr](x+1)^{j}\\
&=\sum_{j=0}^{m}\Biggl[\sum_{k=j}^m2^k(m-k)!\binom{m}{k}^2s(k+1,j+1)\Biggr](x+1)^{j}
\end{align*}
for $m\in\mathbb{N}_0$ and $x\in(-1,\infty)$. In other words,
\begin{equation}\label{frak-B-matrix}
\begin{pmatrix}
G(0,x)\\ G(1,x)\\ G(2,x)\\ G(3,x)\\ \vdots\\ G(m-1,x) \\ G(m,x)
\end{pmatrix}
=\mathfrak{B}_{m+1}\begin{pmatrix}
1\\ x+1\\ (x+1)^2\\ (x+1)^3\\ \vdots\\ (x+1)^{m-1}\\ (x+1)^m
\end{pmatrix}, \quad m\in\mathbb{N}_0, \quad x\in(-1,\infty).
\end{equation}
\par
Since $\mu_{i,i}=2^i$ for $0\le i\le m$, the $(m+1)\times(m+1)$ square matrix $\mathfrak{B}_{m+1}$ is invertible. Combining~\eqref{frak-A-matrix} and~\eqref{frak-B-matrix} leads to~\eqref{frak-AB-matrix}. The proof of Theorem~\ref{frak-AB-matrix-thm} is complete.
\end{proof}

\begin{thm}\label{FG(m-x)-matrix-Final-Thm}
For $m\in\mathbb{N}_0$ and $x\in(-1,\infty)$, we have
\begin{equation}\label{FG(m-x)-matrix-Final}
\begin{pmatrix}
F(0,x)\\ F(1,x)\\ F(2,x)\\ F(3,x)\\ \vdots\\ F(m-1,x) \\ F(m,x)
\end{pmatrix}
=
\mathcal{A}_{m+1}\Biggl[I_{m+1}+\sum_{k=1}^{m}(-1)^k\bigl(\mathcal{D}_{m+1}^{-1} \mathcal{L}_{m+1}\bigr)^k\Biggr]\mathcal{D}_{m+1}^{-1}
\begin{pmatrix}
G(0,x)\\ G(1,x)\\ G(2,x)\\ G(3,x)\\ \vdots\\ G(m-1,x) \\ G(m,x)
\end{pmatrix}
\end{equation}
and
\begin{equation}\label{FG(m-x)-matrix-frak}
\begin{pmatrix}
F(0,x)\\ F(1,x)\\ F(2,x)\\ F(3,x)\\ \vdots\\ F(m-1,x) \\ F(m,x)
\end{pmatrix}
=
\mathfrak{A}_{m+1}\Biggl[I_{m+1}+\sum_{k=1}^{m}(-1)^k\bigl(\mathfrak{D}_{m+1}^{-1} \mathfrak{L}_{m+1}\bigr)^k\Biggr]\mathfrak{D}_{m+1}^{-1}
\begin{pmatrix}
G(0,x)\\ G(1,x)\\ G(2,x)\\ G(3,x)\\ \vdots\\ G(m-1,x) \\ G(m,x)
\end{pmatrix},
\end{equation}
or equivalently,
\begin{equation}\label{rakaij-mstrix}
\begin{pmatrix}a_{i,j}\end{pmatrix}_{0\le i,j\le m}
=\mathcal{A}_{m+1}\Biggl[I_{m+1}+\sum_{k=1}^{m}(-1)^k\bigl(\mathcal{D}_{m+1}^{-1} \mathcal{L}_{m+1}\bigr)^k\Biggr]\mathcal{D}_{m+1}^{-1}
\end{equation}
and
\begin{equation}\label{akaij-mstrix}
\begin{pmatrix}a_{i,j}\end{pmatrix}_{0\le i,j\le m}
=\mathfrak{A}_{m+1}\Biggl[I_{m+1}+\sum_{k=1}^{m}(-1)^k\bigl(\mathfrak{D}_{m+1}^{-1} \mathfrak{L}_{m+1}\bigr)^k\Biggr]\mathfrak{D}_{m+1}^{-1}
\end{equation}
where the $(m+1)\times(m+1)$ square matrices $\mathcal{D}_{m+1}$, $\mathfrak{D}_{m+1}$, $\mathcal{L}_{m+1}$ and $\mathfrak{L}_{m+1}$ are defined by the diagonal matrices
\begin{equation*}
\mathcal{D}_{m+1}=\mathfrak{D}_{m+1}=\diag\bigl(2^0,2^1,2^2,\dotsc,2^m\bigr)
\end{equation*}
and the strictly lower triangular matrices
\begin{equation*}
\mathcal{L}_{m+1}=\begin{pmatrix}
0 & 0 & 0 & 0 & \dotsm & 0 & 0\\
\beta_{1,0} & 0 & 0 & 0 & \dotsm & 0 & 0\\
\beta_{2,0} & \beta_{2,1} & 0 & 0 & \dotsm & 0 & 0\\
\beta_{3,0} & \beta_{3,1} & \beta_{3,2} & 0 & \dotsm & 0 & 0\\
\vdots & \vdots & \vdots & \vdots & \ddots & \vdots & \vdots\\
\beta_{m-1,0} & \beta_{m-1,1} & \beta_{m-1,2} & \beta_{m-1,3} & \dotsm & 0 & 0\\
\beta_{m,0} & \beta_{m,1} & \beta_{m,2} & \beta_{m,3} & \dotsm & \beta_{m,m-1} & 0
\end{pmatrix}
\end{equation*}
and
\begin{equation*}
\mathfrak{L}_{m+1}=\begin{pmatrix}
0 & 0 & 0 & 0 & \dotsm & 0 & 0\\
\mu_{1,0} & 0 & 0 & 0 & \dotsm & 0 & 0\\
\mu_{2,0} & \mu_{2,1} & 0 & 0 & \dotsm & 0 & 0\\
\mu_{3,0} & \mu_{3,1} & \mu_{3,2} & 0 & \dotsm & 0 & 0\\
\vdots & \vdots & \vdots & \vdots & \ddots & \vdots & \vdots\\
\mu_{m-1,0} & \mu_{m-1,1} & \mu_{m-1,2} & \mu_{m-1,3} & \dotsm & 0 & 0\\
\mu_{m,0} & \mu_{m,1} & \mu_{m,2} & \mu_{m,3} & \dotsm & \mu_{m,m-1} & 0
\end{pmatrix}.
\end{equation*}
\end{thm}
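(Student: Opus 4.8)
The plan is to combine the two matrix identities already established in Theorems~\ref{FG(m-x)-matrix-Rel-thm} and~\ref{frak-AB-matrix-thm} with the explicit triangular-inverse formula~\eqref{beta-matrix-inv-D-eq} from Proposition~\ref{Im-Inverse-lem}. The essential observation is a splitting of each coefficient matrix $\mathcal{B}_{m+1}$ and $\mathfrak{B}_{m+1}$ into its diagonal and its strictly lower triangular parts, after which the inverse is read off directly.

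First I would note that, because $\beta_{i,j}=\sum_{k=j}^{i}2^k(i-k)!\binom{i}{k}^2s(k,j)$ vanishes whenever $j>i$, the matrix $\mathcal{B}_{m+1}$ is lower triangular, and its diagonal entries are $\beta_{i,i}=2^i$, as already recorded in the proof of Theorem~\ref{FG(m-x)-matrix-Rel-thm}. Hence $\mathcal{B}_{m+1}=\mathcal{D}_{m+1}+\mathcal{L}_{m+1}$, where $\mathcal{D}_{m+1}=\diag(2^0,2^1,\dotsc,2^m)$ collects the diagonal while $\mathcal{L}_{m+1}$ collects the off-diagonal entries $\beta_{i,j}$ with $i>j$, exactly as displayed in the statement. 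Since $\mathcal{D}_{m+1}$ has no zero diagonal member, Proposition~\ref{Im-Inverse-lem} applies to the $(m+1)\times(m+1)$ matrix $\mathcal{D}_{m+1}+\mathcal{L}_{m+1}$, and formula~\eqref{beta-matrix-inv-D-eq}---with the nilpotency relation $\mathcal{L}_{m+1}^{\,m+1}=O_{m+1}$ supplying the upper summation limit $k=m$---yields
\begin{equation*}
\mathcal{B}_{m+1}^{-1}=\Biggl[I_{m+1}+\sum_{k=1}^{m}(-1)^k\bigl(\mathcal{D}_{m+1}^{-1}\mathcal{L}_{m+1}\bigr)^k\Biggr]\mathcal{D}_{m+1}^{-1}.
\end{equation*}
Substituting this expression for $\mathcal{B}_{m+1}^{-1}$ into the already proven identity~\eqref{FG(m-x)-matrix-Relation} produces~\eqref{FG(m-x)-matrix-Final} and, reading off the coefficient matrix, the equivalent formula~\eqref{rakaij-mstrix}.

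The second pair of identities is obtained by the identical mechanism. Here I would use that $\mu_{i,j}=\sum_{k=j}^{i}2^k(i-k)!\binom{i}{k}^2s(k+1,j+1)$ again vanishes for $j>i$ and satisfies $\mu_{i,i}=2^i$ (since $s(i+1,i+1)=1$), so that $\mathfrak{B}_{m+1}=\mathfrak{D}_{m+1}+\mathfrak{L}_{m+1}$ with $\mathfrak{D}_{m+1}=\mathcal{D}_{m+1}$ and $\mathfrak{L}_{m+1}$ the strictly lower triangular part shown in the statement. Applying~\eqref{beta-matrix-inv-D-eq} to this splitting and inserting the result into the matrix equation~\eqref{frak-AB-matrix} of Theorem~\ref{frak-AB-matrix-thm} gives~\eqref{FG(m-x)-matrix-frak} together with~\eqref{akaij-mstrix}.

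Since every ingredient is either an identity proven in the preceding theorems or a direct specialization of Proposition~\ref{Im-Inverse-lem}, I do not anticipate a genuine obstacle; the argument is purely a substitution. The only point demanding a moment of care is the bookkeeping of the summation range: for an $(m+1)\times(m+1)$ strictly lower triangular matrix the index of nilpotency is $m+1$, so the finite sum in~\eqref{beta-matrix-inv-D-eq} terminates at $k=m$ rather than $k=m-1$, which is precisely the bound appearing in~\eqref{FG(m-x)-matrix-Final} and~\eqref{FG(m-x)-matrix-frak}.
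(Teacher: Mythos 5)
Your proposal is correct and follows exactly the route of the paper's own proof: split $\mathcal{B}_{m+1}=\mathcal{D}_{m+1}+\mathcal{L}_{m+1}$ and $\mathfrak{B}_{m+1}=\mathfrak{D}_{m+1}+\mathfrak{L}_{m+1}$, invoke formula~\eqref{beta-matrix-inv-D-eq} of Proposition~\ref{Im-Inverse-lem}, and substitute into the identities of Theorems~\ref{FG(m-x)-matrix-Rel-thm} and~\ref{frak-AB-matrix-thm}. Your explicit check of the diagonal entries and of the summation bound (nilpotency index $m+1$ for an $(m+1)\times(m+1)$ strictly lower triangular matrix, hence the sum terminating at $k=m$) is a welcome bit of care that the paper's terser proof leaves implicit.
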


\begin{proof}
Since the matrices $\mathcal{A}_{m+1}$, $\mathcal{B}_{m+1}$, $\mathfrak{A}_{m+1}$, and $\mathfrak{B}_{m+1}$ are lower triangular ones with
\begin{equation*}
\mathcal{B}_{m+1}=\mathcal{D}_{m+1}+\mathcal{L}_{m+1} \quad\text{and}\quad \mathfrak{B}_{m+1}=\mathfrak{D}_{m+1}+\mathfrak{L}_{m+1},
\end{equation*}
applying the formula~\eqref{beta-matrix-inv-D-eq} in Proposition~\ref{Im-Inverse-lem} into the matrix equations~\eqref{FG(m-x)-matrix-Relation} and~\eqref{frak-AB-matrix} in Theorems~\ref{FG(m-x)-matrix-Rel-thm} and~\ref{frak-AB-matrix-thm}, we readily arrive at the matrix equations~\eqref{FG(m-x)-matrix-Final} and~\eqref{FG(m-x)-matrix-frak}.
The proof of Theorem~\ref{FG(m-x)-matrix-Final-Thm} is thus complete.
\end{proof}

\section{Remarks}
In this section, we list several remarks on our main results.

\begin{rem}
For more information on Proposition~\ref{Im-Inverse-lem}, please refer to the questions at the web sites \url{https://math.stackexchange.com/q/47543} (accessed on 18 January 2025) and \url{https://math.stackexchange.com/q/5024354} (accessed on 18 January 2025) as well as their answers therein.
\end{rem}

\begin{rem}
When $m=9$, by virtue of the software Mathematica~14.0, we acquire
\begin{align*}
\mathcal{A}_{10}&=
\begin{pmatrix}
 \frac{1}{2} & 0 & 0 & 0 & 0 & 0 & 0 & 0 & 0 & 0 \\
 \frac{1}{4} & \frac{1}{2} & 0 & 0 & 0 & 0 & 0 & 0 & 0 & 0 \\
 0 & \frac{1}{2} & \frac{1}{2} & 0 & 0 & 0 & 0 & 0 & 0 & 0 \\
 -\frac{1}{8} & 0 & \frac{3}{4} & \frac{1}{2} & 0 & 0 & 0 & 0 & 0 & 0 \\
 0 & -\frac{1}{2} & 0 & 1 & \frac{1}{2} & 0 & 0 & 0 & 0 & 0 \\
 \frac{1}{4} & 0 & -\frac{5}{4} & 0 & \frac{5}{4} & \frac{1}{2} & 0 & 0 & 0 & 0 \\
 0 & \frac{3}{2} & 0 & -\frac{5}{2} & 0 & \frac{3}{2} & \frac{1}{2} & 0 & 0 & 0 \\
 -\frac{17}{16} & 0 & \frac{21}{4} & 0 & -\frac{35}{8} & 0 & \frac{7}{4} & \frac{1}{2} & 0 & 0 \\
 0 & -\frac{17}{2} & 0 & 14 & 0 & -7 & 0 & 2 & \frac{1}{2} & 0 \\
 \frac{31}{4} & 0 & -\frac{153}{4} & 0 & \frac{63}{2} & 0 & -\frac{21}{2} & 0 & \frac{9}{4} & \frac{1}{2}
\end{pmatrix},\\
\mathcal{B}_{10}^{-1}
&=
\begin{pmatrix}
1 & 0 & 0 & 0 & 0 & 0 & 0 & 0 & 0 & 0\\
1 & 2 & 0 & 0 & 0 & 0 & 0 & 0 & 0 & 0\\
2 & 4 & 4 & 0 & 0 & 0 & 0 & 0 & 0 & 0\\
6 & 16 & 12 & 8 & 0 & 0 & 0 & 0 & 0 & 0\\
24 & 64 & 80 & 32 & 16 & 0 & 0 & 0 & 0 & 0\\
120 & 368 & 400 & 320 & 80 & 32 & 0 & 0 & 0 & 0\\
720 & 2208 & 3136 & 1920 & 1120 & 192 & 64 & 0 & 0 & 0\\
5040 & 16896 & 21952 & 19712 & 7840 & 3584 & 448 & 128 & 0 & 0\\
40320 & 135168 & 209408 & 157696 & 102144 & 28672 & 10752 & 1024 & 256 & 0\\
362880 & 1297152 & 1884672 & 1838080 & 919296 & 462336 & 96768 & 30720 & 2304 & 512
\end{pmatrix}^{-1}\\
&=\begin{pmatrix}
 1 & 0 & 0 & 0 & 0 & 0 & 0 & 0 & 0 & 0 \\
 -\frac{1}{2} & \frac{1}{2} & 0 & 0 & 0 & 0 & 0 & 0 & 0 & 0 \\
 0 & -\frac{1}{2} & \frac{1}{4} & 0 & 0 & 0 & 0 & 0 & 0 & 0 \\
 \frac{1}{4} & -\frac{1}{4} & -\frac{3}{8} & \frac{1}{8} & 0 & 0 & 0 & 0 & 0 & 0 \\
 0 & 1 & -\frac{1}{2} & -\frac{1}{4} & \frac{1}{16} & 0 & 0 & 0 & 0 & 0 \\
 -\frac{1}{2} & \frac{1}{2} & \frac{15}{8} & -\frac{5}{8} & -\frac{5}{32} & \frac{1}{32} & 0 & 0 & 0 & 0 \\
 0 & -\frac{17}{4} & \frac{17}{8} & \frac{5}{2} & -\frac{5}{8} & -\frac{3}{32} & \frac{1}{64} & 0 & 0 & 0 \\
 \frac{17}{8} & -\frac{51}{64} & -\frac{1933}{128} & \frac{129}{32} & \frac{375}{128} & -\frac{265}{512} & -\frac{61}{1024} & \frac{1}{128} & 0 & 0 \\
 0 & \frac{411}{16} & -\frac{411}{32} & -\frac{227}{8} & \frac{227}{32} & \frac{321}{128} & -\frac{107}{256} & -\frac{1}{32} & \frac{1}{256} & 0 \\
 -\frac{31}{2} & -\frac{1289}{32} & \frac{12345}{64} & -\frac{355}{16} & -\frac{3885}{64} & \frac{2373}{256} & \frac{1281}{512} & -\frac{21}{64} & -\frac{9}{512} & \frac{1}{512} \\
\end{pmatrix},
\end{align*}
and
\begin{equation*}
\mathcal{A}_{10}\mathcal{B}_{10}^{-1}=
\begin{pmatrix}
 \frac{1}{2} & 0 & 0 & 0 & 0 & 0 & 0 & 0 & 0 & 0 \\
 0 & \frac{1}{4} & 0 & 0 & 0 & 0 & 0 & 0 & 0 & 0 \\
 -\frac{1}{4} & 0 & \frac{1}{8} & 0 & 0 & 0 & 0 & 0 & 0 & 0 \\
 0 & -\frac{1}{2} & 0 & \frac{1}{16} & 0 & 0 & 0 & 0 & 0 & 0 \\
 \frac{1}{2} & 0 & -\frac{5}{8} & 0 & \frac{1}{32} & 0 & 0 & 0 & 0 & 0 \\
 0 & \frac{17}{8} & 0 & -\frac{5}{8} & 0 & \frac{1}{64} & 0 & 0 & 0 & 0 \\
 -\frac{17}{8} & 0 & \frac{77}{16} & 0 & -\frac{35}{64} & 0 & \frac{1}{128} & 0 & 0 & 0 \\
 0 & -\frac{31}{2} & 0 & \frac{63}{8} & 0 & -\frac{7}{16} & 0 & \frac{1}{256} & 0 & 0 \\
 \frac{31}{2} & 0 & -55 & 0 & \frac{21}{2} & 0 & -\frac{21}{64} & 0 & \frac{1}{512} & 0 \\
 0 & \frac{691}{4} & 0 & -\frac{265}{2} & 0 & \frac{777}{64} & 0 & -\frac{15}{64} & 0 & \frac{1}{1024}
\end{pmatrix}.
\end{equation*}
See also the question at the site \url{https://math.stackexchange.com/q/5024354} (accessed on 18 January 2025).
From the equation~\eqref{FG(m-x)-matrix-Relation} in Theorem~\ref{FG(m-x)-matrix-Rel-thm}, it follows that
\begin{equation}\label{FG(m-n)-matrix-Rel10}
\begin{pmatrix}
F(0,n)\\ F(1,n)\\ F(2,n)\\ F(3,n)\\ F(4,n)\\ F(5,n)\\ F(6,n)\\ F(7,n)\\ F(8,n) \\ F(9,n)
\end{pmatrix}
=
\begin{pmatrix}
 \frac{1}{2} & 0 & 0 & 0 & 0 & 0 & 0 & 0 & 0 & 0 \\
 0 & \frac{1}{4} & 0 & 0 & 0 & 0 & 0 & 0 & 0 & 0 \\
 -\frac{1}{4} & 0 & \frac{1}{8} & 0 & 0 & 0 & 0 & 0 & 0 & 0 \\
 0 & -\frac{1}{2} & 0 & \frac{1}{16} & 0 & 0 & 0 & 0 & 0 & 0 \\
 \frac{1}{2} & 0 & -\frac{5}{8} & 0 & \frac{1}{32} & 0 & 0 & 0 & 0 & 0 \\
 0 & \frac{17}{8} & 0 & -\frac{5}{8} & 0 & \frac{1}{64} & 0 & 0 & 0 & 0 \\
 -\frac{17}{8} & 0 & \frac{77}{16} & 0 & -\frac{35}{64} & 0 & \frac{1}{128} & 0 & 0 & 0 \\
 0 & -\frac{31}{2} & 0 & \frac{63}{8} & 0 & -\frac{7}{16} & 0 & \frac{1}{256} & 0 & 0 \\
 \frac{31}{2} & 0 & -55 & 0 & \frac{21}{2} & 0 & -\frac{21}{64} & 0 & \frac{1}{512} & 0 \\
 0 & \frac{691}{4} & 0 & -\frac{265}{2} & 0 & \frac{777}{64} & 0 & -\frac{15}{64} & 0 & \frac{1}{1024}
\end{pmatrix}
\begin{pmatrix}
G(0,n)\\ G(1,n)\\ G(2,n)\\ G(3,n)\\ G(4,n)\\ G(5,n)\\ G(6,n)\\ G(7,n)\\ G(8,n) \\ G(9,n)
\end{pmatrix}.
\end{equation}
\par
The matrix equation~\eqref{FG(m-n)-matrix-Rel10} can also be rewritten as
\begin{align*}
F(0,n) &=\frac{1}{2}G(0,n),\\
F(1,n) &=\frac{1}{4}G(1,n),\\
F(2,n) &=\frac{1}{8}G(2,n)-\frac{1}{4}G(0,n),\\
F(3,n) &=\frac{1}{16}G(3,n)-\frac{1}{2}G(1,n),\\
F(4,n) &=\frac{1}{32}G(4,n)-\frac{5}{8}G(2,n)+\frac{1}{2}G(0,n),\\
F(5,n) &=\frac{1}{64}G(5,n)-\frac{5}{8}G(3,n)+\frac{17}{8}G(1,n),\\
F(6,n) &=\frac{1}{128}G(6,n)-\frac{35}{64}G(4,n)+\frac{77}{16}G(2,n)-\frac{17}{8}G(0,n),\\
F(7,n) &=\frac{1}{256}G(7,n)-\frac{7}{16}G(5,n)+\frac{63}{8}G(3,n)-\frac{31}{2}G(1,n),\\
F(8,n) &=\frac{1}{512}G(8,n)-\frac{21}{64}G(6,n)+\frac{21}{2}G(4,n)-55G(2,n)+\frac{31}{2}G(0,n),\\
F(9,n) &=\frac{1}{1024}G(9,n)-\frac{15}{64}G(7,n)+\frac{777}{64}G(5,n)-\frac{265}{2}G(3,n)+\frac{691}{4}G(1,n).
\end{align*}
\end{rem}

\begin{rem}
When $m=9$, with the aid of the software Mathematica~14.0, direct computation gives
\begin{align*}
\mathfrak{A}_{10}&=
\begin{pmatrix}
 \frac{1}{2} & 0 & 0 & 0 & 0 & 0 & 0 & 0 & 0 & 0 \\
 -\frac{1}{4} & \frac{1}{2} & 0 & 0 & 0 & 0 & 0 & 0 & 0 & 0 \\
 0 & -\frac{1}{2} & \frac{1}{2} & 0 & 0 & 0 & 0 & 0 & 0 & 0 \\
 \frac{1}{8} & 0 & -\frac{3}{4} & \frac{1}{2} & 0 & 0 & 0 & 0 & 0 & 0 \\
 0 & \frac{1}{2} & 0 & -1 & \frac{1}{2} & 0 & 0 & 0 & 0 & 0 \\
 -\frac{1}{4} & 0 & \frac{5}{4} & 0 & -\frac{5}{4} & \frac{1}{2} & 0 & 0 & 0 & 0 \\
 0 & -\frac{3}{2} & 0 & \frac{5}{2} & 0 & -\frac{3}{2} & \frac{1}{2} & 0 & 0 & 0 \\
 \frac{17}{16} & 0 & -\frac{21}{4} & 0 & \frac{35}{8} & 0 & -\frac{7}{4} & \frac{1}{2} & 0 & 0 \\
 0 & \frac{17}{2} & 0 & -14 & 0 & 7 & 0 & -2 & \frac{1}{2} & 0 \\
 -\frac{31}{4} & 0 & \frac{153}{4} & 0 & -\frac{63}{2} & 0 & \frac{21}{2} & 0 & -\frac{9}{4} & \frac{1}{2} \\
\end{pmatrix},\\
\mathfrak{B}_{10}^{-1}&=
\begin{pmatrix}
 1 & 0 & 0 & 0 & 0 & 0 & 0 & 0 & 0 & 0 \\
 -1 & 2 & 0 & 0 & 0 & 0 & 0 & 0 & 0 & 0 \\
 2 & -4 & 4 & 0 & 0 & 0 & 0 & 0 & 0 & 0 \\
 -6 & 16 & -12 & 8 & 0 & 0 & 0 & 0 & 0 & 0 \\
 24 & -64 & 80 & -32 & 16 & 0 & 0 & 0 & 0 & 0 \\
 -120 & 368 & -400 & 320 & -80 & 32 & 0 & 0 & 0 & 0 \\
 720 & -2208 & 3136 & -1920 & 1120 & -192 & 64 & 0 & 0 & 0 \\
 -5040 & 16896 & -21952 & 19712 & -7840 & 3584 & -448 & 128 & 0 & 0 \\
 40320 & -135168 & 209408 & -157696 & 102144 & -28672 & 10752 & -1024 & 256 & 0 \\
 -362880 & 1297152 & -1884672 & 1838080 & -919296 & 462336 & -96768 & 30720 & -2304 & 512 \\
\end{pmatrix}^{-1}\\
&=\begin{pmatrix}
 1 & 0 & 0 & 0 & 0 & 0 & 0 & 0 & 0 & 0 \\
 \frac{1}{2} & \frac{1}{2} & 0 & 0 & 0 & 0 & 0 & 0 & 0 & 0 \\
 0 & \frac{1}{2} & \frac{1}{4} & 0 & 0 & 0 & 0 & 0 & 0 & 0 \\
 -\frac{1}{4} & -\frac{1}{4} & \frac{3}{8} & \frac{1}{8} & 0 & 0 & 0 & 0 & 0 & 0 \\
 0 & -1 & -\frac{1}{2} & \frac{1}{4} & \frac{1}{16} & 0 & 0 & 0 & 0 & 0 \\
 \frac{1}{2} & \frac{1}{2} & -\frac{15}{8} & -\frac{5}{8} & \frac{5}{32} & \frac{1}{32} & 0 & 0 & 0 & 0 \\
 0 & \frac{17}{4} & \frac{17}{8} & -\frac{5}{2} & -\frac{5}{8} & \frac{3}{32} & \frac{1}{64} & 0 & 0 & 0 \\
 -\frac{17}{8} & -\frac{17}{8} & \frac{231}{16} & \frac{77}{16} & -\frac{175}{64} & -\frac{35}{64} & \frac{7}{128} & \frac{1}{128} & 0 & 0 \\
 0 & -31 & -\frac{31}{2} & \frac{63}{2} & \frac{63}{8} & -\frac{21}{8} & -\frac{7}{16} & \frac{1}{32} & \frac{1}{256} & 0 \\
 \frac{31}{2} & \frac{31}{2} & -165 & -55 & \frac{105}{2} & \frac{21}{2} & -\frac{147}{64} & -\frac{21}{64} & \frac{9}{512} & \frac{1}{512} \\
\end{pmatrix},
\end{align*}
and
\begin{equation*}
\mathfrak{A}_{10}\mathfrak{B}_{10}^{-1}=
\begin{pmatrix}
 \frac{1}{2} & 0 & 0 & 0 & 0 & 0 & 0 & 0 & 0 & 0 \\
 0 & \frac{1}{4} & 0 & 0 & 0 & 0 & 0 & 0 & 0 & 0 \\
 -\frac{1}{4} & 0 & \frac{1}{8} & 0 & 0 & 0 & 0 & 0 & 0 & 0 \\
 0 & -\frac{1}{2} & 0 & \frac{1}{16} & 0 & 0 & 0 & 0 & 0 & 0 \\
 \frac{1}{2} & 0 & -\frac{5}{8} & 0 & \frac{1}{32} & 0 & 0 & 0 & 0 & 0 \\
 0 & \frac{17}{8} & 0 & -\frac{5}{8} & 0 & \frac{1}{64} & 0 & 0 & 0 & 0 \\
 -\frac{17}{8} & 0 & \frac{77}{16} & 0 & -\frac{35}{64} & 0 & \frac{1}{128} & 0 & 0 & 0 \\
 0 & -\frac{31}{2} & 0 & \frac{63}{8} & 0 & -\frac{7}{16} & 0 & \frac{1}{256} & 0 & 0 \\
 \frac{31}{2} & 0 & -55 & 0 & \frac{21}{2} & 0 & -\frac{21}{64} & 0 & \frac{1}{512} & 0 \\
 0 & \frac{691}{4} & 0 & -\frac{265}{2} & 0 & \frac{777}{64} & 0 & -\frac{15}{64} & 0 & \frac{1}{1024} \\
\end{pmatrix}.
\end{equation*}
It is easy to see that
\begin{equation*}
\mathcal{A}_{10}\ne\mathfrak{A}_{10}, \quad \mathcal{B}_{10}\ne\mathfrak{B}_{10},\quad \mathcal{A}_{10}\mathcal{B}_{10}^{-1}=\mathfrak{A}_{10}\mathfrak{B}_{10}^{-1}.
\end{equation*}
Generally, it follows that
\begin{equation*}
\mathcal{A}_{m+1}\ne\mathfrak{A}_{m+1}, \quad \mathcal{B}_{m+1}\ne\mathfrak{B}_{m+1},\quad 
\mathcal{A}_{m+1}\mathcal{B}_{m+1}^{-1}=\mathfrak{A}_{m+1}\mathfrak{B}_{m+1}^{-1}
\end{equation*}
for $m\in\mathbb{N}_0$.
\end{rem}

\begin{rem}
Since $s(k,j)=0$ and $\binom{k}{j}=0$ for $j>k\in\mathbb{N}_0$, the matrices $\mathcal{A}_{m+1}$ and $\mathcal{B}_{m+1}$ defined in~\eqref{Matrix-A(m+1)-dfn} and~\eqref{Matrix-B(m+1)-dfn} are lower triangular ones. By related knowledge in matrix analysis, we see that the inverse $\mathcal{B}_{m+1}^{-1}$ is also a lower triangular matrix, and then the product $\mathcal{A}_{m+1}\mathcal{B}_{m+1}^{-1}$ is a lower triangular matrix too. Consequently, the entries $a_{i,j}=0$ for $0\le i<j$ is true.
\par
From~\eqref{Matrix-A(m+1)-dfn} and~\eqref{Matrix-B(m+1)-dfn}, we derive
\begin{equation*}
\alpha_{i,i}=\frac{1}{2}
\quad\text{and}\quad
\beta_{i,i}=2^i.
\end{equation*}
From~\eqref{lambda-matrix} and~\eqref{mu-matrix}, we deduce
\begin{equation*}
\lambda_{i,i}=\frac1{2}
\quad\text{and}\quad
\mu_{i,i}=2^i.
\end{equation*}
These mean that the entries $a_{i,i}=\frac{1}{2^{i+1}}$ for $0\le i\le m$.
\par
The matrix equation~\eqref{FG(m-n)-matrix-Rel10} motivates us to guess that the entries $a_{i,j}$ in either~\eqref{aij-mstrix} or~\eqref{frakaij-mstrix} or~\eqref{rakaij-mstrix} or~\eqref{akaij-mstrix} satisfy the following positivity:
\begin{enumerate}
\item
$a_{i,j}=0$ for $i-j=2k-1\in\mathbb{N}$ with $k\in\mathbb{N}$,
\item
$a_{i,j}<0$ for $i-j=4k-2\in\mathbb{N}$ with $k\in\mathbb{N}$, and
\item
$a_{i,j}>0$ for $i-j=4k\in\mathbb{N}$ with $k\in\mathbb{N}$.
\end{enumerate}
\end{rem}

\begin{rem}
What are the general explicit expressions for the entries $a_{i,j}$ with $0\le j<i\le m$ of the square matrix $\begin{pmatrix}a_{i,j}\end{pmatrix}_{0\le i,j\le m}$ in either~\eqref{aij-mstrix} or~\eqref{frakaij-mstrix} or~\eqref{rakaij-mstrix} or~\eqref{akaij-mstrix}?
\end{rem}

\begin{rem}
When $x=0$, we have
\begin{equation*}
F(m,0)=\eta(-m) \quad \text{and}\quad G(m,0)=\Gamma(m+1)=m!
\end{equation*}
for $m\in\mathbb{N}_0$.
Then the linear combination~\eqref{Long-Propb} becomes
\begin{equation}\label{Long-Propb=0}
\eta(-m)=\sum_{j=0}^m a_{m,j}j!
\end{equation}
for $m\in\mathbb{N}_0$
and the matrix equation~\eqref{FG(m-n)-matrix-Rel10} becomes
\begin{align*}
\eta(0) &=\frac{1}{2}\times0!,\\ 
\eta(-1)&=\frac{1}{4}\times1!,\\
\eta(-2) &=\frac{1}{8}\times2!-\frac{1}{4}\times0!,\\
\eta(-3) &=\frac{1}{16}\times3!-\frac{1}{2}\times1!,\\
\eta(-4) &=\frac{1}{32}\times4!-\frac{5}{8}\times2!+\frac{1}{2}\times0!,\\
\eta(-5) &=\frac{1}{64}\times5!-\frac{5}{8}\times3!+\frac{17}{8}\times1!,\\
\eta(-6) &=\frac{1}{128}\times6!-\frac{35}{64}\times4!+\frac{77}{16}\times2!-\frac{17}{8}\times0!,\\
\eta(-7) &=\frac{1}{256}\times7!-\frac{7}{16}\times5!+\frac{63}{8}\times3!-\frac{31}{2}\times1!,\\
\eta(-8) &=\frac{1}{512}\times8!-\frac{21}{64}\times6!+\frac{21}{2}\times4!-55\times2!+\frac{31}{2}\times0!,\\
\eta(-9) &=\frac{1}{1024}\times9!-\frac{15}{64}\times7!+\frac{777}{64}\times5!-\frac{265}{2}\times3!+\frac{691}{4}\times1!.
\end{align*}
\par
In~\cite[Remark~2]{CMP6528.tex}, see also the paper~\cite{log-secant-norm-tail.tex} and the question at the site \url{https://math.stackexchange.com/q/307274} (accessed on 19 January 2025) as well as several answers there, the authors derived
\begin{equation}\label{Remark2CMP6528.tex}
\eta(1-n)=\sum_{k=1}^n(-1)^{k-1}\frac{(k-1)!}{2^{k}}S(n,k),\quad n\in\mathbb{N},
\end{equation}
with $\eta(-2n)=0$ for $n\in\mathbb{N}$ and
\begin{equation}\label{Remark2CMP6528.tex2}
\eta(1-2n)=\sum_{k=1}^{2n}(-1)^{k-1}\frac{(k-1)!}{2^{k}}S(2n,k),\quad n\in\mathbb{N}.
\end{equation}
\par
The formula~\eqref{Remark2CMP6528.tex} can be reformulated as
\begin{equation}\label{Remark2CMP6528.tex-rew}
\eta(-m)=\sum_{j=0}^{m}\frac{(-1)^{j}}{2^{j+1}}S(m+1,j+1)j!,\quad m\in\mathbb{N}_0.
\end{equation}
Comparing~\eqref{Long-Propb=0} with~\eqref{Remark2CMP6528.tex-rew} reveals that
\begin{equation*}
\begin{pmatrix}a_{i,j}\end{pmatrix}_{0\le i,j\le m}\ne \begin{pmatrix}\displaystyle \frac{(-1)^{j}}{2^{j+1}}S(i+1,j+1)\end{pmatrix}_{0\le i,j\le m}, \quad m\in\mathbb{N}.
\end{equation*}
\par
By the way, the formulas~\eqref{Remark2CMP6528.tex} and~\eqref{Remark2CMP6528.tex2} correct several minor typos in~\cite[Remark~2]{CMP6528.tex}.
\end{rem}

\begin{rem}
It is clear that
\begin{equation*}
F(t,n)=2^t\int_{\frac{2+n}{2}}^{\frac{1+n}{2}}\frac{\partial \zeta(-t,s)}{\partial s}\td s.
\end{equation*}
Making use of the partial derivative
\begin{equation*}
\frac{\partial \zeta(s,a)}{\partial a}=-s\zeta(s+1,a)
\end{equation*}
for $s\ne0,1$ and $\Re(a)>0$, see~\cite[p.~608, Entry~25.11.17]{NIST-HB-2010}, we obtain
\begin{equation*}
F(t,n)=2^t t\int_{\frac{2+n}{2}}^{\frac{1+n}{2}}\zeta(1-t,s)\td s, \quad t\in(0,\infty).
\end{equation*}
\par
The sequences $F(m,n)$ and $G(m,n)$ can also be extended to
\begin{equation*}
F(t,n), \quad G(t,n); \quad\text{and}\quad F(t,x), \quad G(t,x)
\end{equation*}
for $n\in\mathbb{N}_0$ and $x,t\in(-1,\infty)$.
Can one find out the scalars $b_{n,j}$ such that
\begin{equation*}
F(t,n)=\sum_{j=0}^{\infty}b_{n,j}G(t,j)
\end{equation*}
is valid for $n\in\mathbb{N}_0$ and $t\in(-1,\infty)$?
\end{rem}

\section{Declarations}

\paragraph{\bf Funding}
Not applicable.

\paragraph{\bf Institutional Review Board Statement}
Not applicable.

\paragraph{\bf Informed Consent Statement}
Not applicable.

\paragraph{\bf Ethical Approval}
The conducted research is not related to either human or animal use.

\paragraph{\bf Availability of Data and Material}
Data sharing is not applicable to this article as no new data were created or analyzed in this study.

\paragraph{\bf Acknowledgements}
The author is thankful to Dr. Xuan-Yu Long (University of Utah) for raising the problem in this paper and helpful discussions.

\paragraph{\bf Competing Interests}
The author declares that he has no any conflict of competing interests.

\paragraph{\bf Use of AI tools declaration}
The author declares he has not used Artificial Intelligence (AI) tools in the creation of this article.

\end{document}